\newtheorem{Thm}{Theorem}[section]
\newtheorem{Lem}[Thm]{Lemma}
\newtheorem{Cor}[Thm]{Corollary}
\newcommand{\B}{\mathrm{B}}
\theoremstyle{definition}
\newtheorem{Rem}[Thm]{Remark}
\def\blfootnote{\xdef\@thefnmark{}\@footnotetext}
\title{Structure of the  third moment \\ of the generalized Rosenblatt distribution}
\author{
        Shuyang Bai \quad Murad Taqqu\\ 
}
\begin{document}
\maketitle
\begin{abstract}
The Rosenblatt distribution appears as limit in non-central limit theorems.
The generalized Rosenblatt distribution  is obtained by allowing different power exponents in the kernel that defines the usual Rosenblatt distribution. We derive an explicit formula for its third moment, correcting the one in \citet{maejima:tudor:2012:selfsimilar} and \citet{tudor:2013:analysis}. Evaluating this formula numerically, we are  able  to confirm that the class of generalized Hermite processes is strictly richer than the class of Hermite processes. 
\end{abstract}

\blfootnote{
\begin{flushleft}
\textbf{Key words} Long memory; Self-similar processes; ;  Rosenblatt processes; Generalized Rosenblatt processes 
\end{flushleft}
\textbf{2010 AMS Classification:} 60G18, 60F05\\
}
\section{Introduction}
The Rosenblatt process is a non-Gaussian self-similar process with stationary increments. It can be represented by a double Wiener-It\^o integral as follows:
\begin{equation}\label{eq:Rosenblatt proc}
Z_\gamma(t)= A~ \int_{\mathbb{R}^2}' ~\int_0^t  (s-x_1)_+^{\gamma}(s-x_2)_+^{\gamma} ds ~B(dx_1) B(dx_2),
\end{equation}
where $A\neq 0$ is a constant, the prime $'$ indicates the \emph{exclusion} of the diagonals $x_1=x_2$ in the integral, $\gamma\in (-3/4,-1/2)$, and $B(\cdot)$ is a Brownian random measure.  The process is self-similar with  Hurst  index  $H=2\gamma+2\in (1/2,1)$, that is, for any constant $a>0$, $\{Z(at)\}$ and $\{a^HZ(t)\}$ have the same finite-dimensional distributions.

The marginal distribution of $Z_\gamma(t)$, which we call the \emph{Rosenblatt distribution}, was first characterized by \citet{rosenblatt:1961:independence}, and  the Rosenblatt process was then defined in \citet{taqqu:1975:weak}. The Rosenblatt process belongs to a more general class of processes called \emph{Hermite processes}. A $k$-th order Hermite process is defined through a $k$-tuple Wiener It\^o integral with integrand $\int_0^t\prod_{j=1}^k (s-x_j)_+^\gamma ds$ in (\ref{eq:Rosenblatt proc}), where $-1/2-1/(2k)<\gamma<-1/2$. The Rosenblatt process is thus a Hermite process with $k=2$. Hermite processes can appear as limits in  so-called \emph{non-central limit theorems} involving a nonlinear function of  a long-range dependent Gaussian process (\citet{dobrushin:major:1979:non}, \citet{taqqu:1979:convergence}), or  a nonlinear function of a long-range dependent  linear process (\citet{surgailis:1982:zones}, \citet{ho:hsing:1997:limit}).

\citet{maejima:tudor:2012:selfsimilar}  considered the following extension of the Rosenblatt process:
\begin{equation}\label{eq:gen Rosenblatt proc}
Z_{\gamma_1,\gamma_2}(t)=  \frac{A}{2}~ \int_{\mathbb{R}^2}' ~\int_0^t [(s-x_1)_+^{\gamma_1}(s-x_2)_+^{\gamma_2}+(s-x_1)_+^{\gamma_2}(s-x_2)_+^{\gamma_1}] ds ~B(dx_1) B(dx_2),
\end{equation}
where 
\[\gamma_1,\gamma_2\in (-1,-1/2)\text{ and }\gamma_1+\gamma_2>-3/2.\] 
We shall call $Z_{\gamma_1,\gamma_2}(t)$ a \emph{generalized Rosenblatt process}. They computed the second and the third moment of the $Z_{\gamma_1,\gamma_2}(1)$, but unfortunately  their formula for the third moment is incorrect. The third moment will play a crucial role in the identification of the process.

The generalized Rosenblatt process $Z_{\gamma_1,\gamma_2}(t)$ belongs  to a broad class of self-similar process with stationary increments defined on a Wiener chaos called \emph{generalized Hermite process}, which was first introduced by \cite{mori:toshio:1986:law}. See also \citet{bai:taqqu:2013:3-generalized} for details. 

A generalized Hermite process can be represented by a multiple Wiener-It\^o integral  as 
\begin{equation}\label{eq:gen Herm proc}
Z_g(t)=\int_{\mathbb{R}}'~\int_0^t g(s-x_1,\ldots,s-x_k)1_{\{s_1>x_1,\ldots,s_k>x_k\}} ds ~ B(dx_1)\ldots B(dx_k),
\end{equation}
where the nonzero function $g$ is called a \emph{generalized Hermite kernel} (GHK) and is defined by the following two properties:
\begin{enumerate}
\item $g(\lambda x_1,\ldots,\lambda x_k)=\lambda^\alpha g(x_1,\ldots,x_k)$, for some $\alpha\in (-k/2-1/2,-k/2)$;
\item $\int_{\mathbb{R}_+^k} |g(1+x_1,\ldots,1+x_k)g(x_1,\ldots,x_k)| d\mathbf{x}<\infty$.
\end{enumerate}
The first condition is one of homogeneity to ensure that the resulting process $Z_g(t)$ is self-similar. The second condition ensures that the integrand in (\ref{eq:gen Herm proc}) is square integrable. By heuristically interchanging the order of the two integrations $\int_0^t \cdot ds$ and $\int_{\mathbb{R}^k}' \cdot B(dx_1)\ldots B(dx_k)$ in (\ref{eq:gen Herm proc}), the process can be viewed as an integrated process of a stationary nonlinear moving average, which explains the stationary increments of $Z_g(t)$.

Note that  for $Z_{\gamma_1,\gamma_2}$ in (\ref{eq:gen Rosenblatt proc}),  
\[
g(x_1,x_2)=\frac{A}{2}  [x_1^{\gamma_1}x_2^{\gamma_2}+x_1^{\gamma_2}x_2^{\gamma_1}],
\]
and $\alpha=\gamma_1+\gamma_2$.
It follows \citet{bai:taqqu:2013:3-generalized} that $Z_g(t)$ is self-similar with  Hurst index  
\[H=\alpha+k/2+1\in (1/2,1). 
\]
The process $Z_g$ and other related processes appear as limits in various types of non-central limit theorems involving Voterra-type nonlinear process. See  \citet{bai:taqqu:2014:4-convergence} and \citet{bai:taqqu:2014:5-impact} for details. The following is a natural question:
\begin{center}
\emph{Is the class of generalized Hermite processes strictly richer than the class of Hermite processes for a given $k$ and $H$}? \footnote{Processes differing by a multiplicative constant are considered to be the same process.}
\end{center}

Since all generalized Hermite processes are $H$-self-similar with stationary increments, they all have identical covariances up to a multiplicative factor. Hence the covariance cannot be of any help in answering the preceding question. 

In this paper, we answer the preceding question positively by computing explicitly the second and the third moment of the marginal law of the generalized Rosenblatt  process $Z_{\gamma_1,\gamma_2}(t)$ in (\ref{eq:gen Rosenblatt proc}) at $t=1$, namely, the law of $Z_{\gamma_1,\gamma_2}(1)$ which we call the \emph{generalized Rosenblatt distribution}. Since the  second and the third moments can be expressed in terms of beta functions, one can evaluate   the  moments numerically in an  accurate way,  and use them to show that the preceding question has a positive answer.  

\begin{Rem}
The second moment formula (\ref{eq:mu_2}) has been obtained in Lemma 2.2 of \citet{maejima:tudor:2012:selfsimilar}\footnote{\citet{maejima:tudor:2012:selfsimilar} also attempted to compute the third moment, but unfortunately the function $f_{H_1,H_2}(u_1,u_2,u_3)$ in the proof of their Proposition 3.1 was not computed correctly. The exponents in the first and the third factor of $f_{H_1,H_2}(u_1,u_2,u_3)$ should be $H_1-1$ and $H_2-1$ respectively according to their Lemma 2.1. This error was reproduced  in the proof of Proposition 3.10 of \citet{tudor:2013:analysis}.}.
\end{Rem}

The paper is organized as follows. In Section \ref{sec:main}, we state our formulas for the second and the third moments of $Z_{\gamma_1,\gamma_2}(1)$. Section \ref{sec:proof} contains some preliminary lemmas.  Section \ref{sec:proof main} contains the proof of the results of Section \ref{sec:main}. In Section \ref{sec:num}, we present the numerical evaluation of the third moment of a standardized $Z_{\gamma_1,\gamma_2}(1)$ and answer positively the question  stated above.

\section{Main results}\label{sec:main}

The random variable  $Z_{\gamma_1,\gamma_2}(1)$ defined in (\ref{eq:gen Rosenblatt proc}) has mean $\mu_1(\gamma_1,\gamma_2)=0$ since it is expressed as a Wiener-It\^o integral. 
The following theorem provides 
an explicit expression of the second and the third moment of  $Z_{\gamma_1,\gamma_2}(1)$.   
\begin{Thm}\label{Thm:main}
The second moment of $Z_{\gamma_1,\gamma_2}(1)$ is
\begin{align}
\mu_2(\gamma_1,\gamma_2)=&    \frac{A^2}{(\gamma_1+\gamma_2+2)(2(\gamma_1+\gamma_2)+3)}\times\notag\\& 
\big[\B(\gamma_{1}+1,-\gamma_{1}-\gamma_{2}-1) \B(\gamma_{2}+1,-\gamma_{1}-\gamma_{2}-1)+\B(\gamma_{1}+1,-2\gamma_{1}-1) \B(\gamma_{2}+1,-2\gamma_{2}-1)\big],\label{eq:mu_2}
\end{align}
where $\B(x,y)$ denotes the beta function (\ref{eq:beta}).
The third moment of $Z_{\gamma_1,\gamma_2}(1)$ is
\begin{align}
\mu_3(\gamma_1,\gamma_2)=& \frac{2A^3}{(\gamma_1+\gamma_2+2)(3(\gamma_1+\gamma_2)+5)}\times\notag \\&\Big[ \sum_{\sigma\in \{1,2\}^3}
B(\gamma_{\sigma_1}+1,-\gamma_{\sigma_1}-\gamma_{\sigma_3'}-1)
 B(\gamma_{\sigma_1'}+1,-\gamma_{\sigma_1'}-\gamma_{\sigma_2}-1)
B(\gamma_{\sigma_2'}+1,-\gamma_{\sigma_2'}-\gamma_{\sigma_3}-1)\times\nonumber \\&
\qquad \quad ~~B(\gamma_{\sigma_1'}+\gamma_{\sigma_2}+2,\gamma_{\sigma_2'}+\gamma_{\sigma_3}+2)\Big],\label{eq:mu_3}
\end{align}
where  $\sigma=(\sigma_1,\sigma_2,\sigma_3)$ with $\sigma_i=1$ or $2$, and $\sigma'$ is the complement of $\sigma$, namely, $\sigma'_i=3-\sigma_i$.
\end{Thm}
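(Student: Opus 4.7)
The plan is to compute $\mu_2$ and $\mu_3$ directly via the moment formulas for a double Wiener-Itô integral. Writing $Z_{\gamma_1,\gamma_2}(1) = I_2(f)$ with symmetric kernel
\[
f(x_1,x_2) = \int_0^1 g_s(x_1,x_2)\,ds, \qquad g_s(x_1,x_2) := \frac{A}{2}\left[(s-x_1)_+^{\gamma_1}(s-x_2)_+^{\gamma_2} + (s-x_1)_+^{\gamma_2}(s-x_2)_+^{\gamma_1}\right],
\]
one has $\mu_2 = 2\|f\|_{L^2(\mathbb{R}^2)}^2$ and $\mu_3 = 8\int_{\mathbb{R}^3} f(x_1,x_2)f(x_2,x_3)f(x_3,x_1)\,dx_1 dx_2 dx_3$. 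For both moments I would apply Fubini to swap the time integrations $\int_0^1\cdots ds$ with the $x$-integrations, so the only elementary one-dimensional integral involved is
\[
K_{a,b}(u,v) := \int_{\mathbb{R}}(u-x)_+^a(v-x)_+^b\,dx,
\]
which evaluates via the substitution $y = (\min(u,v)-x)/|u-v|$ to $|u-v|^{a+b+1}\B(a+1,-a-b-1)$ when $u<v$ and to $|u-v|^{a+b+1}\B(b+1,-a-b-1)$ when $u>v$. I would record this identity together with the two moment formulas as preliminary lemmas.

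For $\mu_2$, expanding $f\cdot f$ yields four products $g_s\cdot g_t$; after integrating out $x_1,x_2$ these factor into products of two $K$-factors in the single pair $(s,t)$, and the four terms collapse to $2$ times the bracket in (\ref{eq:mu_2}) multiplied by $|s-t|^{2(\gamma_1+\gamma_2+1)}$. Using $\int_0^1\int_0^1 |s-t|^\beta\,ds\,dt = 2/[(\beta+1)(\beta+2)]$ with $\beta = 2(\gamma_1+\gamma_2+1)$ gives the denominator $(\gamma_1+\gamma_2+2)(2(\gamma_1+\gamma_2)+3)$ of (\ref{eq:mu_2}).

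For $\mu_3$, expanding the triple product produces $2^3=8$ terms indexed by $\sigma\in\{1,2\}^3$; integrating out $x_1,x_2,x_3$ leaves, for each $\sigma$, the product
\[
K_{\gamma_{\sigma_1},\gamma_{\sigma_3'}}(s_1,s_3)\,K_{\gamma_{\sigma_1'},\gamma_{\sigma_2}}(s_1,s_2)\,K_{\gamma_{\sigma_2'},\gamma_{\sigma_3}}(s_2,s_3).
\]
The conceptual heart of the argument is the observation that $G(s_1,s_2,s_3) := \int g_{s_1}(x_1,x_2)g_{s_2}(x_2,x_3)g_{s_3}(x_3,x_1)\,dx$ is fully symmetric in $(s_1,s_2,s_3)$: any permutation of the $s_i$'s can be absorbed by a suitable relabeling of the $x_j$'s combined with the symmetry of each $g_s$ in its two arguments. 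This allows restriction of the $(s_1,s_2,s_3)$-integration to the simplex $\{0\leq s_1\leq s_2\leq s_3\leq 1\}$ compensated by a factor of $6$, avoiding the six structurally different beta-function patterns that would otherwise arise from different orderings of the $s_i$. Correctly tracking the exponent bookkeeping is the main technical obstacle.

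On the simplex $\{s_1\leq s_2\leq s_3\}$ each $K$-factor evaluates in a single regime, contributing the first three beta factors $\B(\gamma_{\sigma_1}+1,-\gamma_{\sigma_1}-\gamma_{\sigma_3'}-1)\,\B(\gamma_{\sigma_1'}+1,-\gamma_{\sigma_1'}-\gamma_{\sigma_2}-1)\,\B(\gamma_{\sigma_2'}+1,-\gamma_{\sigma_2'}-\gamma_{\sigma_3}-1)$ of (\ref{eq:mu_3}). The remaining triple integral
\[
\int_{0\leq s_1\leq s_2\leq s_3\leq 1}(s_3-s_1)^a(s_2-s_1)^b(s_3-s_2)^c\,ds_1\,ds_2\,ds_3,
\]
with $a+b+c = 3(\gamma_1+\gamma_2)+3$, reduces by the substitution $u = s_2-s_1$, $v = s_3-s_2$ together with $\int_0^t u^b(t-u)^c\,du = t^{b+c+1}\B(b+1,c+1)$ to $\B(\gamma_{\sigma_1'}+\gamma_{\sigma_2}+2,\gamma_{\sigma_2'}+\gamma_{\sigma_3}+2)/[(3(\gamma_1+\gamma_2)+5)\cdot 3(\gamma_1+\gamma_2+2)]$. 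Combining the overall prefactor $8\cdot 6\cdot (A^3/8) = 6A^3$ with the $3(\gamma_1+\gamma_2+2)$ in the denominator then produces the $2A^3/[(\gamma_1+\gamma_2+2)(3(\gamma_1+\gamma_2)+5)]$ of (\ref{eq:mu_3}).
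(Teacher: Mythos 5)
Your proposal is correct and follows essentially the same route as the paper: the cumulant formula for double Wiener--It\^o integrals, the one-dimensional identity for $\int_{\mathbb{R}}(u-x)_+^a(v-x)_+^b\,dx$, the symmetry of the $s$-integrand permitting restriction to the simplex, and evaluation of the resulting simplex integral by iterated beta integrals. The only cosmetic difference is that the paper packages the simplex integral as a general $m$-fold lemma (Lemma~\ref{Lem:key}) and treats $m=2,3$ uniformly, whereas you compute the $m=2$ and $m=3$ cases directly; the constants in both treatments agree.
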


To compare the values of the third moment as $\gamma_1$ and $\gamma_2$ vary, we shall set the variance $\mu_2(\gamma_1,\gamma_2)=1$. By Theorem \ref{Thm:main}, this determines the constant $A$ as:
\begin{equation*}
A(\gamma_1,\gamma_2)=\left(\frac{(\gamma_1+\gamma_2+2)(2(\gamma_1+\gamma_2)+3)}{\B(\gamma_{1}+1,-\gamma_{1}-\gamma_{2}-1) \B(\gamma_{2}+1,-\gamma_{1}-\gamma_{2}-1)+\B(\gamma_{1}+1,-2\gamma_{1}-1) \B(\gamma_{2}+1,-2\gamma_{2}-1)}\right)^{1/2}.
\end{equation*}
Hence
\begin{Cor}\label{Cor}
The third moment of the standardized $Z_{\gamma_1,\gamma_2}(1)$ is 
\begin{align*}
M_3(\gamma_1,\gamma_2)=F_1(\gamma_1,\gamma_2)F_2(\gamma_1,\gamma_2)F_3(\gamma_1,\gamma_2),
\end{align*}
where
\[
F_1(\gamma_1,\gamma_2)=2(\gamma_1+\gamma_2+2)^{1/2}(2(\gamma_1+\gamma_2)+3)^{3/2}(3(\gamma_1+\gamma_2)+5)^{-1},
\]
\begin{align*}
F_2(\gamma_1,\gamma_2)=&\sum_{\sigma\in \{1,2\}^3}
B(\gamma_{\sigma_1}+1,-\gamma_{\sigma_1}-\gamma_{\sigma_3'}-1)
 B(\gamma_{\sigma_1'}+1,-\gamma_{\sigma_1'}-\gamma_{\sigma_2}-1)
B(\gamma_{\sigma_2'}+1,-\gamma_{\sigma_2'}-\gamma_{\sigma_3}-1)\times \\&
\qquad \quad ~~B(\gamma_{\sigma_1'}+\gamma_{\sigma_2}+2,\gamma_{\sigma_2'}+\gamma_{\sigma_3}+2),
\end{align*}
and
\[
F_3(\gamma_1,\gamma_2)=\left[\B(\gamma_{1}+1,-\gamma_{1}-\gamma_{2}-1) \B(\gamma_{2}+1,-\gamma_{1}-\gamma_{2}-1)+\B(\gamma_{1}+1,-2\gamma_{1}-1) \B(\gamma_{2}+1,-2\gamma_{2}-1)\right]^{-3/2}.
\]

\end{Cor}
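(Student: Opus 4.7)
The plan is to observe that Corollary \ref{Cor} is a direct algebraic consequence of Theorem \ref{Thm:main}: once the second and third moments are known, obtaining the third moment of the standardized distribution is just a bookkeeping exercise that tracks the constant $A$ and separates the resulting expression into the factors $F_1,F_2,F_3$.

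First I would note that for any nonzero $A$ the random variable $Z_{\gamma_1,\gamma_2}(1)$ scales linearly in $A$, so that $\mu_2(\gamma_1,\gamma_2)$ is proportional to $A^2$ and $\mu_3(\gamma_1,\gamma_2)$ is proportional to $A^3$. Writing $\mu_2=A^2\,\Phi_2(\gamma_1,\gamma_2)$ and $\mu_3=A^3\,\Phi_3(\gamma_1,\gamma_2)$, the third moment of the standardization $Z_{\gamma_1,\gamma_2}(1)/\sqrt{\mu_2}$ equals the scale-invariant ratio
\[
M_3(\gamma_1,\gamma_2)=\frac{\mu_3(\gamma_1,\gamma_2)}{\mu_2(\gamma_1,\gamma_2)^{3/2}}=\frac{\Phi_3(\gamma_1,\gamma_2)}{\Phi_2(\gamma_1,\gamma_2)^{3/2}},
\]
which is exactly what one obtains by plugging the specific choice $A=A(\gamma_1,\gamma_2)$ (i.e.\ the one that forces $\mu_2=1$) into $\mu_3$. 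So the claim reduces to verifying this ratio factors as $F_1F_2F_3$.

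Next I would read off from (\ref{eq:mu_2}) and (\ref{eq:mu_3}) that
\[
\Phi_2(\gamma_1,\gamma_2)=\frac{S(\gamma_1,\gamma_2)}{(\gamma_1+\gamma_2+2)(2(\gamma_1+\gamma_2)+3)},\qquad
\Phi_3(\gamma_1,\gamma_2)=\frac{2\,T(\gamma_1,\gamma_2)}{(\gamma_1+\gamma_2+2)(3(\gamma_1+\gamma_2)+5)},
\]
where $S$ is the bracketed sum of two products of beta functions in (\ref{eq:mu_2}) and $T$ is the sum over $\sigma\in\{1,2\}^3$ in (\ref{eq:mu_3}). Computing $\Phi_3/\Phi_2^{3/2}$ and collecting powers of $(\gamma_1+\gamma_2+2)$ and $(2(\gamma_1+\gamma_2)+3)$ gives
\[
M_3(\gamma_1,\gamma_2)=\frac{2\,(\gamma_1+\gamma_2+2)^{1/2}(2(\gamma_1+\gamma_2)+3)^{3/2}}{3(\gamma_1+\gamma_2)+5}\cdot T(\gamma_1,\gamma_2)\cdot S(\gamma_1,\gamma_2)^{-3/2},
\]
and the three factors are precisely $F_1$, $F_2$, and $F_3$. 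There is no real obstacle here; the only thing to be careful about is matching the exponent $1/2$ on $(\gamma_1+\gamma_2+2)$ and $3/2$ on $(2(\gamma_1+\gamma_2)+3)$ that arise from the square root in the denominator $\Phi_2^{3/2}$, which is routine. Substituting the explicit formula for $A(\gamma_1,\gamma_2)$ given just before the corollary into (\ref{eq:mu_3}) yields the same expression and is an alternative, equally direct, presentation of the proof.
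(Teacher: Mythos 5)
Your proposal is correct and matches the paper's (implicit) argument: the corollary follows by substituting the normalizing constant $A(\gamma_1,\gamma_2)$ that forces $\mu_2=1$ into (\ref{eq:mu_3}), equivalently by computing the scale-invariant ratio $\mu_3/\mu_2^{3/2}$, and your bookkeeping of the exponents on $(\gamma_1+\gamma_2+2)$ and $(2(\gamma_1+\gamma_2)+3)$ is accurate.
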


\section{Preliminary lemmas}\label{sec:proof}
We shall use the following cumulant formula for a double Wiener-It\^o integral (see, e.g., (8.4.3) of \citet{nourdin:peccati:2012:normal}):
\begin{Lem}\label{Lem:double integral cumulant}
If $f$ is a symmetric function in $L^2(\mathbb{R}^2)$, then the $m$-th cumulant of the double Wiener-It\^o integral $X=\int_{\mathbb{R}^2}' f(y_1,y_2)B(dy_1) B(dy_2)$ is given by the following circular integral:
\[
\kappa_m(X)=2^{m-1}(m-1)!  \int_{\mathbb{R}^m} f(y_1,y_2)f(y_2,y_3)\ldots f(y_{m-1},y_m)f(y_m,y_1) dy_1\ldots dy_m.
\]  
\end{Lem}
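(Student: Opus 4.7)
The plan is to reduce the lemma to a spectral computation by diagonalizing the kernel $f$. Since $f$ is symmetric and in $L^2(\mathbb{R}^2)$, it defines a Hilbert--Schmidt (hence compact, self-adjoint) integral operator $T_f$ on $L^2(\mathbb{R})$ via $(T_f h)(y)=\int f(y,z)h(z)dz$. By the spectral theorem there is an orthonormal system $\{e_k\}$ in $L^2(\mathbb{R})$ and real eigenvalues $\{\lambda_k\}$ with $\sum_k \lambda_k^2 = \|f\|_{L^2(\mathbb{R}^2)}^2 < \infty$, such that $f(y_1,y_2)=\sum_k \lambda_k e_k(y_1)e_k(y_2)$ in $L^2(\mathbb{R}^2)$. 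Setting $Z_k:=\int_{\mathbb{R}} e_k(y) B(dy)$, the $Z_k$ are i.i.d. standard Gaussian, and the It\^o isometry together with the product formula for a double integral with no diagonal yield
\[
X = \int_{\mathbb{R}^2}' f(y_1,y_2) B(dy_1)B(dy_2) = \sum_k \lambda_k (Z_k^2 - 1),
\]
where the series converges in $L^2$.

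Next I would compute the cumulants term by term. The random variables $\lambda_k(Z_k^2-1)$ are independent, so $\kappa_m(X) = \sum_k \kappa_m(\lambda_k(Z_k^2-1)) = \sum_k \lambda_k^m\, \kappa_m(Z_1^2-1)$. The cumulant generating function of $Z_1^2 - 1$ is $\log \mathbb{E}[e^{t(Z_1^2-1)}] = -t - \tfrac{1}{2}\log(1-2t)$ for $|t|<1/2$, whose Taylor expansion gives $\kappa_m(Z_1^2-1) = 2^{m-1}(m-1)!$ for every $m \geq 2$ (and $0$ for $m=1$). Hence
\[
\kappa_m(X) = 2^{m-1}(m-1)! \sum_k \lambda_k^m.
\]
The exchange of summation and cumulant is justified because for $m\geq 2$ one has $\sum_k |\lambda_k|^m \leq (\sup_k |\lambda_k|)^{m-2}\sum_k \lambda_k^2 < \infty$, and the partial sums converge to $X$ in every $L^p$ (Wiener chaos hypercontractivity).

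Finally, I would identify $\sum_k \lambda_k^m$ with the circular integral. Iterating the kernel, $f^{\ast m}(y_1,y_{m+1}) := \int f(y_1,y_2)f(y_2,y_3)\cdots f(y_m,y_{m+1}) dy_2\cdots dy_m$ represents $T_f^m$, whose trace equals $\sum_k \lambda_k^m$; equivalently,
\[
\sum_k \lambda_k^m = \int_{\mathbb{R}} f^{\ast m}(y_1,y_1) dy_1 = \int_{\mathbb{R}^m} f(y_1,y_2)f(y_2,y_3)\cdots f(y_{m-1},y_m)f(y_m,y_1) dy_1\cdots dy_m,
\]
which yields the claimed formula.

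The main technical obstacle is the trace-class justification: for general $L^2$ kernels $T_f^m$ is trace-class only when $m\geq 2$, so one must verify absolute integrability of the circular integrand and equality of the trace with the diagonal integral. This is standard for Hilbert--Schmidt operators iterated at least twice (any product of two HS operators is trace class), and Mercer-type or Fubini arguments give the pointwise/integral form of the trace. Once this is in place, the three steps above combine to deliver the lemma.
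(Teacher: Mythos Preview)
Your argument is correct and is the standard spectral/eigenvalue proof of this identity. Note, however, that the paper does not actually prove the lemma at all: it merely quotes the formula with a pointer to (8.4.3) of Nourdin and Peccati (2012). So there is no ``paper's own proof'' to compare against; your proposal supplies a complete justification where the paper gives only a citation.
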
 
Note, however, that for a random variable with zero mean, which is the case for $Z_{\gamma_1,\gamma_2}(1)$, the second and the third cumulants coincide with the second and the third moments respectively.

The following formulas involving the beta function $\B(x,y)$ will be used many times: 
\begin{equation}\label{eq:beta}
\B(x,y):=\int_0^1 u^{x-1}(1-u)^{y-1}du=\int_0^\infty w^{x-1} (1+w)^{-x-y}dw=\frac{\Gamma(x)\Gamma(y)}{\Gamma(x+y)}
\end{equation}
for all $x,y>0$.

\begin{Lem}\label{Lem:useful 1}
For  $a,b\in (-1,-1/2)$,
\[
\int_{\mathbb{R}} (s_1-u)^a_+ (s_2-u)^b_+ du= (s_2-s_1)_+^{a+b+1} B(a+1,-a-b-1) + (s_1-s_2)_+^{a+b+1}\B(b+1,-a-b-1) .
\]
\end{Lem}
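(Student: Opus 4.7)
The proof is a direct computation by splitting into the two cases $s_1<s_2$ and $s_1>s_2$ (the equality case occurs on a measure-zero set and can be disregarded). The symmetric structure of the claimed identity — one term for each ordering — is the tipoff.

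First I would suppose $s_1<s_2$. In this case $(s_1-s_2)_+^{a+b+1}=0$, so we must match the first term on the right-hand side. The integrand $(s_1-u)_+^a(s_2-u)_+^b$ is supported on $u<s_1$ (this is the stricter of the two positivity conditions), so the integral collapses to $\int_{-\infty}^{s_1}(s_1-u)^a(s_2-u)^b\,du$. The natural substitution is $w=s_1-u$, which rewrites the integrand as $w^a(s_2-s_1+w)^b$ on $(0,\infty)$. Then the rescaling $w=(s_2-s_1)t$ pulls out a factor of $(s_2-s_1)^{a+b+1}$ and leaves $\int_0^\infty t^a(1+t)^b\,dt$, which is exactly $\B(a+1,-a-b-1)$ by the second integral representation in (\ref{eq:beta}).

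The case $s_1>s_2$ is completely symmetric: here the integrand is supported on $u<s_2$, and swapping the roles of $s_1,s_2$ (equivalently, substituting $w=s_2-u$ and then rescaling by $s_1-s_2$) produces the factor $(s_1-s_2)^{a+b+1}$ together with $\int_0^\infty t^b(1+t)^a\,dt=\B(b+1,-a-b-1)$.

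I would close by verifying that the parameter constraints needed for the beta representation are genuinely met: the identity $\int_0^\infty t^{x-1}(1+t)^{-x-y}dt=\B(x,y)$ requires $x,y>0$, which here means $a+1>0$ and $-a-b-1>0$ (and analogously $b+1>0$). Both hold under the hypothesis $a,b\in(-1,-1/2)$, since then $a+b\in(-2,-1)$ forces $-a-b-1\in(0,1)$. There is no real obstacle in this argument; the only thing to be slightly careful about is keeping track of which case produces which beta factor, so that the labeling of $a$ vs.\ $b$ in $\B(\cdot,-a-b-1)$ matches the statement.
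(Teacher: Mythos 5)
Your proposal is correct and follows essentially the same route as the paper's proof: reduce to the case $s_1<s_2$, restrict the integral to $u<s_1$, and rescale by $s_2-s_1$ to obtain $(s_2-s_1)^{a+b+1}\int_0^\infty w^a(1+w)^b\,dw=\left(s_2-s_1\right)^{a+b+1}\B(a+1,-a-b-1)$ via the second representation in (\ref{eq:beta}). Your explicit check of the parameter constraints is a slightly more careful version of the paper's remark that $a,b<-1/2$ ensures $a+b+1<0$.
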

\begin{proof}
Suppose without loss of generality $s_1<s_2$, then 
\begin{align*}
\int_{-\infty}^{s_1} (s_1-u)^a (s_2-u)^b du&=(s_2-s_1)^{a+b+1}\int_{-\infty}^{s_1} \left(\frac{s_1-u}{s_2-s_1}\right)^a \left(\frac{s_2-u}{s_2-s_1}\right)^b d\left( \frac{u}{s_2-s_1}\right)\\
&=(s_2-s_1)^{a+b+1} \int_0^\infty w^a(1+w)^b dw, 
\end{align*}
by the change of variable $w=(s_1-u)/(s_2-s_1)$. Note that  $a,b<-1/2$ guarantees that $a+b+1<0$.
\end{proof}

\begin{Lem}\label{Lem:useful 2}
For $a,b>-1$  and $x<y$,  
\[
\int_x^y (u-x)^{a}(y-u)^b  du =(y-x)^{a+b+1} \B(a+1,b+1).
\]
\end{Lem}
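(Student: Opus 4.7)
The plan is to reduce the stated integral to the standard integral representation of the beta function by a single affine change of variable that maps the interval $[x,y]$ onto $[0,1]$.

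Concretely, I would set $t=(u-x)/(y-x)$, so that $u=x+t(y-x)$ and $du=(y-x)\,dt$, and the limits $u=x,y$ go to $t=0,1$. Under this substitution, $u-x=t(y-x)$ and $y-u=(1-t)(y-x)$, both nonnegative on the interval since $x<y$. Substituting and factoring out the powers of $(y-x)$, the integral becomes
\[
\int_x^y (u-x)^a(y-u)^b\,du=(y-x)^{a+b+1}\int_0^1 t^a(1-t)^b\,dt.
\]
The remaining integral is exactly the definition of $\B(a+1,b+1)$ from \eqref{eq:beta}, and the hypothesis $a,b>-1$ is what guarantees that $\int_0^1 t^a(1-t)^b\,dt$ converges at both endpoints. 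Combining these two observations gives the stated identity.

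There is no genuine obstacle here; this is a routine change-of-variable computation that is essentially the content of the definition of the beta function, and the only thing to be careful about is checking the convergence condition $a,b>-1$ (which is assumed) and verifying that the substitution is orientation-preserving because $y-x>0$.
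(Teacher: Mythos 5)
Your proof is correct and is essentially identical to the paper's: both use the affine substitution $w=(u-x)/(y-x)$ to map $[x,y]$ onto $[0,1]$ and reduce the integral to the defining expression $\int_0^1 w^a(1-w)^b\,dw=\B(a+1,b+1)$. Your additional remark about where the hypothesis $a,b>-1$ is used is a harmless (and welcome) elaboration.
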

\begin{proof}
\begin{align*}
\int_x^y (u-x)^{a}(y-u)^b du& = (y-x)^{a+b+1} \int_x^y \left(\frac{u-x}{y-x}\right)^a \left(\frac{y-u}{y-x}\right)^b d\left(\frac{u}{y-x}\right)
\\ &=(y-x)^{a+b+1} \int_0^1 w^a (1-w)^b dw.
\end{align*}
\end{proof}

\begin{Lem}\label{Lem:key}
For $\beta_j >-1$, $j=1,\ldots,m$, $m\ge 2$, such that $\beta_1+\ldots+\beta_m+m>1$, we have
\begin{align}
&\int_{0<s_1<\ldots <s_m<1} (s_m-s_1)^{\beta_1} (s_2-s_1)^{\beta_2} (s_3-s_2)^{\beta_3}\ldots (s_m-s_{m-1})^{\beta_m} ds_1 \ldots s_m \label{eq:start}
\\=&(m+\beta_1+\ldots+\beta_m)^{-1}(m-1+\beta_1+\ldots+\beta_m)^{-1}\frac{\Gamma(\beta_2+1)\Gamma(\beta_3+1)\ldots \Gamma(\beta_m+1)}{\Gamma(\beta_2+\beta_3+\ldots+\beta_m+m-1)}.\notag
\end{align}
\end{Lem}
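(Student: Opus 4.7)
The plan is to decouple the endpoint variables $s_1,s_m$ from the inner variables $s_2,\ldots,s_{m-1}$ via an affine rescaling, and then evaluate the resulting simplex-type integral by iterated beta integration. Concretely, for fixed $s_1<s_m$ I will substitute $t_i=(s_i-s_1)/(s_m-s_1)$ for $i=2,\ldots,m-1$, so that $0<t_2<\cdots<t_{m-1}<1$. The factor $(s_m-s_1)^{\beta_1}$ persists unchanged, while each consecutive-difference factor $(s_{i+1}-s_i)^{\beta_{i+1}}$ picks up a power of $(s_m-s_1)$ together with a corresponding difference in the $t$-variables, and the Jacobian contributes an additional $(s_m-s_1)^{m-2}$. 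Altogether the integrand separates as $(s_m-s_1)^{S+m-2}$ times a function of $(t_2,\ldots,t_{m-1})$ alone, where $S=\beta_1+\cdots+\beta_m$.

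Next I would dispatch the outer integral $\int_{0<s_1<s_m<1}(s_m-s_1)^{S+m-2}\,ds_1\,ds_m$ by elementary antidifferentiation in $s_1$ followed by $s_m$. This yields $[(S+m-1)(S+m)]^{-1}$, which is exactly the first factor in the claimed formula. The hypothesis $S+m>1$ is precisely what guarantees convergence at both stages.

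It then remains to evaluate the inner simplex integral
\[
I:=\int_{0<t_2<\cdots<t_{m-1}<1} t_2^{\beta_2}(t_3-t_2)^{\beta_3}\cdots(t_{m-1}-t_{m-2})^{\beta_{m-1}}(1-t_{m-1})^{\beta_m}\,dt_2\cdots dt_{m-1}.
\]
I plan to peel off one variable at a time from the inside out. Integrating $t_2$ over $(0,t_3)$ via Lemma \ref{Lem:useful 2} produces the factor $\B(\beta_2+1,\beta_3+1)$ and replaces $t_2^{\beta_2}(t_3-t_2)^{\beta_3}$ by $t_3^{\beta_2+\beta_3+1}$; integrating $t_3$ over $(0,t_4)$ next produces $\B(\beta_2+\beta_3+2,\beta_4+1)$ and leaves $t_4^{\beta_2+\beta_3+\beta_4+2}$; and so on, until the final integration of $t_{m-1}$ over $(0,1)$ contributes $\B(\beta_2+\cdots+\beta_{m-1}+m-2,\beta_m+1)$. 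Rewriting each $\B$ via $\Gamma(a)\Gamma(b)/\Gamma(a+b)$, the denominators telescope against the numerators of the next factor, and the product collapses to $\Gamma(\beta_2+1)\cdots\Gamma(\beta_m+1)/\Gamma(\beta_2+\cdots+\beta_m+m-1)$. The assumption $\beta_j>-1$ is what ensures each individual beta integral is well defined.

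I do not expect a substantive obstacle: the entire argument is a sequence of one-dimensional beta integrals. The only genuine choice is the initial affine rescaling, and its role is to make the ``long'' factor $(s_m-s_1)^{\beta_1}$ decouple cleanly from the rest instead of coupling the $s_1,s_m$ integration to the inner simplex. Combining the outer and inner evaluations then yields the stated identity.
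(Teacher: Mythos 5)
Your proposal is correct; every step checks out, including the exponent count $(s_m-s_1)^{S+m-2}$ after the affine rescaling, the outer integral $\int_{0<s_1<s_m<1}(s_m-s_1)^{S+m-2}\,ds_1\,ds_m=[(S+m-1)(S+m)]^{-1}$, and the telescoping of the beta factors in the inner simplex integral. It differs from the paper's proof in the initial change of variables. The paper scales all variables by $s_m$ alone (using homogeneity of the integrand of total degree $S+m-1$ in $s_m$), which produces only the factor $(m+S)^{-1}$ up front and leaves an $(m-1)$-dimensional integral over $0<u_1<\cdots<u_{m-1}<1$ that still carries the ``long'' factor as $(1-u_1)^{\beta_1}$; the authors then must integrate top-down, from $u_{m-1}$ to $u_1$, so that the $\beta_1$ factor is consumed only at the very last step, where $\int_0^1(1-u_1)^{\beta_1+\beta_2+\cdots+\beta_m+m-2}\,du_1$ yields the second factor $(m-1+S)^{-1}$. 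Your two-endpoint rescaling $t_i=(s_i-s_1)/(s_m-s_1)$ instead decouples $\beta_1$ completely at the outset: both denominator factors come out of the elementary two-dimensional $(s_1,s_m)$ integral, and what remains is a pure Dirichlet-type integral on the simplex in which the order of the iterated beta integrations is immaterial. Both routes rest on the same key ingredient (Lemma \ref{Lem:useful 2} plus the telescoping identity $\B(a,b)=\Gamma(a)\Gamma(b)/\Gamma(a+b)$) and are equally elementary; yours arguably makes the role of the hypothesis $S+m>1$ more transparent, since it is isolated in the single outer integral, while the $\beta_j>-1$ conditions govern the inner one.
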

\begin{proof}
For convenience set $C_m=(m+\beta_1+\ldots+\beta_m)^{-1}$, and $C_{m-1}'=(m-1+\beta_1+\ldots+\beta_m)^{-1}$.
The starting expression (\ref{eq:start}) can  be written as:
\begin{align*}
&\int_{0<s_1<\ldots <s_m<1} s_m^{\beta_1+\ldots+\beta_m} \left(1-\frac{s_1}{s_m}\right)^{\beta_1} \left(\frac{s_2}{s_m}-\frac{s_1}{s_m}\right)^{\beta_2} \ldots \left(\frac{s_{m-1}}{s_m}-\frac{s_{m-2}}{s_m}\right)^{\beta_{m-1}} \left(1-\frac{s_{m-1}}{s_m}\right)^{\beta_m} ds_1 \ldots s_m\\
=&\int_0^1 s^{\beta_1+\ldots+\beta_m+m-1} ds \int_{0<u_1<\ldots<u_{m-1}<1} (1-u_1)^{\beta_1} \ldots (u_{m-1}-u_{m-2})^{\beta_{m-1}}(1-u_{m-1})^{\beta_m} du_{m-1}\ldots du_{1}\\
=&C_m \int_{0<u_1<\ldots<u_{m-1}<1} (1-u_1)^{\beta_1} (u_2-u_1)^{\beta_2}\ldots (u_{m-1}-u_{m-2})^{\beta_{m-1}}(1-u_{m-1})^{\beta_m} du_{m-1}\ldots du_{1}.
\end{align*}
Integrating over $u_{m-1}$, we get by Lemma \ref{Lem:useful 2} that (\ref{eq:start}) equals
\begin{align*}
&C_m \B(\beta_{m-1}+1,\beta_{m}+1) \int_{0<u_1<\ldots<u_{m-2}<1} (1-u_1)^{\beta_1} \ldots (u_{m-2}-u_{m-3})^{\beta_{m-2}}(1-u_{m-2})^{\beta_{m-1}+\beta_m+1} du_{m-2}\ldots du_{1} .
\end{align*}
Now by repeatedly applying Lemma \ref{Lem:useful 2},  we can write (\ref{eq:start}) as:
\begin{align*}
& C_m  \B(\beta_{m-1}+1,\beta_{m}+1)\B(\beta_{m-2}+1,\beta_{m-1}+\beta_{m}+2)\ldots \B (\beta_2+1,\beta_3+\ldots+\beta_m+m-2)\times \\& \int_0^1 (1-u_1)^{\beta_1} (1-u_1)^{\beta_2+\ldots+\beta_m+m-2}du_1
\\=& C_mC_{m-1}'  \frac{\Gamma(\beta_{m-1}+1)\Gamma(\beta_{m}+1)}{{\Gamma(\beta_{m-1}+\beta_m+2)}} \frac{\Gamma(\beta_{m-2}+1){\Gamma(\beta_{m-1}+\beta_{m}+2)}}{{\Gamma(\beta_{m-2}+\beta_{m-1}+\beta_{m}+2)}}\ldots  \frac{\Gamma(\beta_2+1){\Gamma(\beta_3+\ldots+\beta_m+m-2)}}{\Gamma(\beta_2+\ldots+\beta_m+m-1)}\\
=& (m+\beta_1+\ldots+\beta_m)^{-1}(m-1+\beta_1+\ldots+\beta_m)^{-1}\frac{\Gamma(\beta_2+1)\Gamma(\beta_3+1)\ldots \Gamma(\beta_m+1)}{\Gamma(\beta_2+\beta_3+\ldots+\beta_m+m-1)}.
\end{align*}
\end{proof}

\section{Proof of Theorem \ref{Thm:main}}\label{sec:proof main}
\begin{proof}
Set $g(x,y)=\frac{A}{2}(x_+^{\gamma_1}y_+^{\gamma_2}+x_+^{\gamma_2}y_+^{\gamma_1}),$
and observe that $g$ is symmetric.
In view of Lemma \ref{Lem:double integral cumulant}, we  need to compute the following integral for $m=2$ and $m=3$: 
\begin{equation}\label{eq:c_m}
c_m=\int_{[0,1]^m} d\mathbf{s}I(s_1,\ldots,s_m),
\end{equation}
where  
\begin{equation}\label{eq:I}
I(s_1,\ldots,s_m)=\int_{\mathbb{R}^m} d\mathbf{x} 
g(s_1-x_1,s_1-x_2)g(s_2-x_2,s_2-x_3)\ldots g(s_m-x_m,s_m-x_1).
\end{equation}
The case $m=2$ was done by \citet{maejima:tudor:2012:selfsimilar}. It is instructive, however, to continue using the symbol $m$. 

We claim that for $m=2,3$,  $I(s_1,\ldots,s_m)$ does not change if one permutes $s_1,\ldots,s_m$.  For $m=2$, this is obvious since the integrand is $g(s_1-x_1,s_1-x_2)g(s_2-x_2,s_2-x_1)=g(s_2-x_1,s_2-x_2)g(s_1-x_1,s_1-x_2)$ using the symmetry of $g$. For $m=3$, suppose one switches $s_2$ with $s_3$, then we have by the symmetry of $g$ that
\begin{align*}
&g(s_1-x_1,s_1-x_2)g(s_3-x_2,s_3-x_3)g(s_2-x_3,s_2-x_1)\\
=& g(s_1-x_2,s_1-x_1) g(s_2-x_1,s_2-x_3) g(s_3-x_3,s_3-x_2).
\end{align*}
Now if one changes the sub-indices (which does not affect the integral) of $x_i$'s in the following way: $x_2\rightarrow x_1$, $x_1\rightarrow x_2$, $x_3\rightarrow x_3$,  one gets exactly the original integrand expression:
\[g(s_1-x_1,s_1-x_2)g(s_2-x_2,s_2-x_3)g(s_3-x_3,s_3-x_1). 
\]
Similarly the integral $I(s_1,s_2,s_3)$ does not change  if one switches $s_1$ with $s_3$ or switches $s_2$ with $s_3$. 

Therefore, $I(s_1,\ldots,s_m)$ in (\ref{eq:I}) is a symmetric function for $m=2,3$.\footnote{One can check that the symmetry does not hold for $m\ge 4$, and hence the arguments in this proof only works for $m=2,3$.}
Hence it suffices to focus the integration on 
\[
E_m:= \{(\mathbf{x},\mathbf{s})\in \mathbb{R}^{m}\times [0,1]^m, s_1<\ldots< s_m\}.
\]
Then 
\begin{align*}
c_m=&\int_{[0,1]^m} d\mathbf{s} \int_{\mathbb{R}^m} d\mathbf{x} g(s_1-x_1,s_1-x_2)\ldots g(s_m-x_m,s_m-x_1)\notag\\
=&m!\int_{E_{m}} d\mathbf{s}d\mathbf{x} g(s_1-x_1,s_1-x_2)\ldots g(s_m-x_m,s_m-x_1).
\end{align*}
To evaluate the integral, we  view the indices below modulo $m$, e.g., $x_{m+1}=x_1$ and $s_0=s_m$. Then
\begin{align*}
c_m=& m!A^m2^{-m}\int_{E_{m}} d\mathbf{s}d\mathbf{x}\prod_{i=1}^{m} [(s_{i}-x_{i})_+^{\gamma_1} (s_{i}-x_{i+1})_+^{\gamma_2}+(s_{i}-x_{i})_+^{\gamma_2} (s_{i}-x_{i+1})_+^{\gamma_1}]\\
=&m!A^m2^{-m}\sum_{\sigma\in \{1,2\}^m} \int_{E_{m}} d\mathbf{s}d\mathbf{x} \prod_{i=1}^{m} (s_{i}-x_i)_+^{\gamma_{\sigma_i}}(s_{i}-x_{i+1})_+^{\gamma_{\sigma_{i}'}},
\end{align*}
where if $\sigma_i=1$ then $\sigma_i'=2$ and vice versa.

Now since $(s_1-x_1)_+^{\gamma_{\sigma_1}}(s_0-x_1)_+^{\gamma_{\sigma_m'}}=(s_1-x_1)_+^{\gamma_{\sigma_1}}(s_m-x_1)_+^{\gamma_{\sigma_m'}}$, we can reorder the terms in the product and write using Lemma \ref{Lem:useful 1},
\begin{align}
c_m=&m!A^m2^{-m}\sum_{\sigma\in \{1,2\}^m} \int_{E_{m}} d\mathbf{s}d\mathbf{x} \prod_{i=1}^{m} (s_{i}-x_i)_+^{\gamma_{\sigma_i}}(s_{i-1}-x_i)_+^{\gamma_{\sigma_{i-1}'}} \notag
\\
=&m!A^m2^{-m}\sum_{\sigma\in \{1,2\}^m} \int_{0<s_1<\ldots<s_m<1} d\mathbf{s}  、\int_{\mathbb{R}}(s_1-x_1)_+^{\gamma_{\sigma_1}}(s_{m}-x_1)_+^{\gamma_{\sigma_{m}'}}  dx_1 \prod_{i=2}^{m} \int_\mathbb{R} (s_{i-1}-x_i)_+^{\gamma_{\sigma_{i-1}'}} (s_{i}-x_i)_+^{\gamma_{\sigma_i}} dx_i \notag
\\
=&m!A^m2^{-m} \sum_{\sigma\in \{1,2\}^m}\left[ \B(\gamma_{\sigma_1}+1,-\gamma_{\sigma_m'}-\gamma_{\sigma_1}-1)\prod_{i=2}^m \B(\gamma_{\sigma_{i-1}'}+1,-\gamma_{\sigma_{i-1}'}-\gamma_{\sigma_{i}}-1)\right]J_\sigma,\label{eq:J_sigma}
\end{align}
where 
\[
J_{\sigma}=\int_{0<s_1<\ldots<s_m<1} (s_m-s_1)^{\gamma_{\sigma_m'}+\gamma_{\sigma_1}+1} \prod_{i=2}^m  (s_{i}-s_{i-1})^{\gamma_{\sigma_{i-1}'}+\gamma_{\sigma_i}+1} d\mathbf{s}.
\]
Applying Lemma \ref{Lem:key} to $J_\sigma$, by setting  $\beta_1=\gamma_{\sigma_m'}+\gamma_{\sigma_1}+1$, $\beta_i=\gamma_{\sigma_{i-1}'}+\gamma_{\sigma_i}+1$ for $i=2,\ldots,m$, one gets 
\begin{align*}
J_{\sigma}=
&(m+\beta_1+\ldots+\beta_m)^{-1}(m-1+\beta_1+\ldots+\beta_m)^{-1}\frac{\Gamma(\beta_2+1)\Gamma(\beta_3+1)\ldots \Gamma(\beta_m+1)}{\Gamma(\beta_2+\beta_3+\ldots+\beta_m+m-1)}.
\end{align*}
Since $\gamma_{\sigma_j}+\gamma_{\sigma_j'}=\gamma_1+\gamma_2$, we have 
\[
\sum_{i=1}^m \beta_i=\gamma_{\sigma_m'}+\gamma_{\sigma_1}+\ldots+\gamma_{\sigma_1'}+\gamma_{\sigma_m}+m=m(\gamma_1+\gamma_2+1)
\]
and
\[
\sum_{i=2}^m \beta_i=\gamma_{\sigma_1'}+(\gamma_{\sigma_2}+\gamma_{\sigma_2'})+\ldots+(\gamma_{\sigma_{m-1}}+\gamma_{\sigma_{m-1}'})+\gamma_{\sigma_m}+(m-1),
\]
where $\sum_{i=1}^m \beta_i+m=m(\gamma_1+\gamma_2+2)>1$ because $\gamma_1+\gamma_2>-3/2$ and $m\ge 2$, and hence Lemma \ref{Lem:key} applies.
This yields
\begin{align*}
J_{\sigma}=
m^{-1}[\gamma_1+\gamma_2+2]^{-1}[m(\gamma_1+\gamma_2)+2m-1]^{-1}\frac{\prod_{i=2}^m\Gamma(\gamma_{\sigma_{i-1}'}+\gamma_{\sigma_{i}}+2)}{\Gamma\big(\gamma_{\sigma_{1}'}+\gamma_{\sigma_{m}}+(m-2)(\gamma_1+\gamma_2)+2(m-1)\big)}.
\end{align*}
Plugging this $J_\sigma$ in the expression of $c_m$ in  (\ref{eq:J_sigma}) and  using Lemma \ref{Lem:double integral cumulant}, we have
\begin{equation}\label{eq:mu_m}
\mu_m(\gamma_1,\gamma_2)=2^{m-1} (m-1)!c_m. 
\end{equation}

Suppose first $m=2$. In this case, summing over $\sigma\in \{1,2\}^2$ in (\ref{eq:J_sigma}) means letting $\sigma$ take the values $(1,1)$, $(1,2)$, 
$(2,1)$ and $(2,2)$. We then gain a factor of $2$, because, by symmetry, the terms in (\ref{eq:J_sigma}) corresponding to $(1,1)$ and $(2,2)$ are identical and so are the terms corresponding to $(1,2)$ and $(2,1)$. Thus (\ref{eq:mu_m}) yields (\ref{eq:mu_2}).

In the case $m=3$, we have
\[
J_{\sigma}=
3^{-1}[\gamma_1+\gamma_2+2]^{-1}[3(\gamma_1+\gamma_2)+5]^{-1}\frac{\Gamma(\gamma_{\sigma_{1}'}+\gamma_{\sigma_{2}}+2)\Gamma(\gamma_{\sigma_{2}'}+\gamma_{\sigma_{3}}+2)}{\Gamma\big(\gamma_{\sigma_{1}'}+\gamma_{\sigma_{m}}+\gamma_1+\gamma_2+4\big)}.
\]
So (\ref{eq:mu_m}) yields (\ref{eq:mu_3}) using the last equality in (\ref{eq:beta}). This completes the proof of Theorem \ref{Thm:main}.

\end{proof}

\section{Numerical evaluation of the third moment}\label{sec:num}

We shall show  that the class of  generalized Hermite distributions strictly contains the class of Hermite distributions. More specifically, we show that the class of generalized Rosenblatt distribution strictly contains the class of Rosenblatt distributions. For this purpose, we  restrict throughout the variance 
\[\mu_2(\gamma_1,\gamma_2)=1, 
\]
and compute numerically the third moment $M_3(\gamma_1,\gamma_2)$ as given in Corollary \ref{Cor}. Figure \ref{fig:contour} displays a contour plot of the third moment $\mu_3(\gamma_1,\gamma_2)$ in (\ref{eq:mu_3}). 

\begin{figure}
\centering
\includegraphics[scale=0.9]{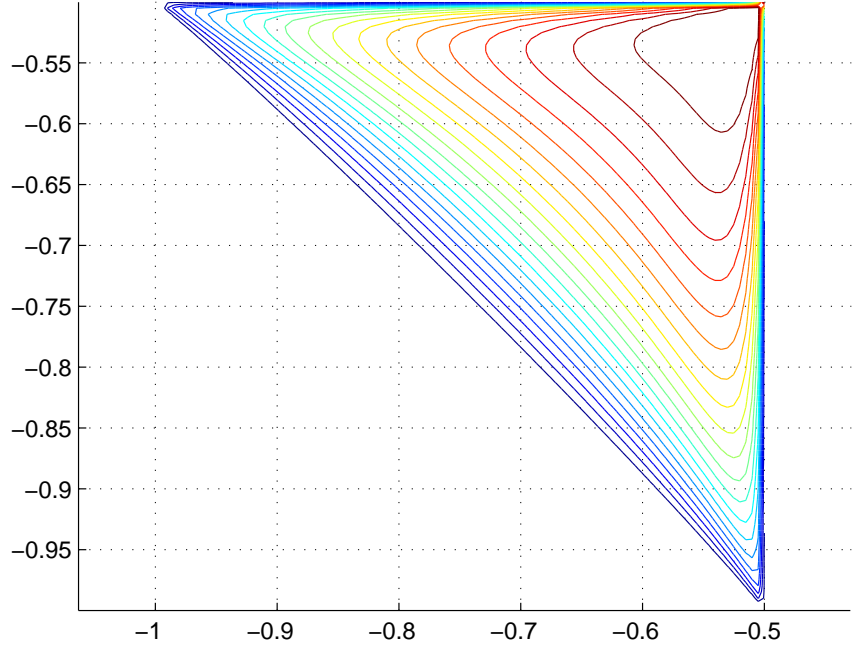}
\caption{Contour plot of $\mu_3(\gamma_1,\gamma_2)$. \\{\small
Boundaries are given by the lines $\gamma_1=-1/2$, $\gamma_2=-1/2$ and $\gamma_1+\gamma_2=-3/2$.}}\label{fig:contour}
\end{figure}

We shall also fix  $\alpha=\gamma_1+\gamma_2$, or equivalently, fix the Hurst index $H=\alpha+2$, and show that the third moment $M_3(\gamma_1,\gamma_2)$ does change when $\gamma_1$  changes and $\gamma_2=\alpha-\gamma_1$.

In Tables 1-4 and Figures 1-4, we list and plot the values of 
\[
\text{$M_3(\gamma_1,\alpha-\gamma_1)$ against $\gamma_1$ for $H=0.6,0.7,0.8,0.9$.}
\] 
\begin{Rem}
Due to the symmetry, $M_3(\gamma_1,\gamma_2)=M_3(\gamma_2,\gamma_1)$. Recall that $\gamma_1,\gamma_2\in (-1,-1/2)$ with $\gamma_1+\gamma_2>-3/2$. Thus $\alpha=\gamma_1+\gamma_2\in (-3/2,-1)$ and $H=\alpha+2\in (1/2,1)$. In Tables 1-4 we let $\gamma_1$ take values from $\alpha/2$ to $-0.505$. 
\end{Rem}

\begin{Rem}
If $\gamma_1=\gamma_2$, then $\gamma_1=\gamma_2=\alpha/2$, and  
$M_3(\alpha/2,\alpha/2)$ becomes the third moment of the  standardized Rosenblatt distribution $Z_{\alpha/2}(1)$ (see (\ref{eq:Rosenblatt proc})). Its values (given in the first column in the tables)  coincide with those obtained in \citet{veillette:taqqu:2013:properties}. See Table 4 of the supplement of \citet{veillette:taqqu:2013:properties}, where they are listed as a function of  the parameter $D=1-H$.
\end{Rem}

Since $M_3(\gamma_1,\gamma_2)$ varies with $\gamma_1+\gamma_2=\alpha$ fixed, we conclude that  the class of  generalized Hermite distributions is strictly richer than the class of Hermite distributions.


\begin{table}
\centering
\begin{tabular}{|c|c|c|c|c|c|c|c|c|c|c|}
\hline
$\gamma_1$ &-0.700 & -0.678 & -0.657 & -0.635 & -0.613 & -0.592 & -0.570 & -0.548 & -0.527 & -0.505 \\
\hline
$M_3(\gamma_1,\alpha-\gamma_1)$ &1.183 & 1.189 & 1.206 & 1.236 & 1.281 & 1.340 & 1.413 & 1.486 & 1.488 & 0.947 \\
\hline
\end{tabular}
\caption{$M_3(\gamma_1,\alpha-\gamma_1)$ when $\alpha=-1.4$  (or $H=0.6$).} 
\end{table}
\begin{figure}
\centering
\includegraphics[scale=0.6]{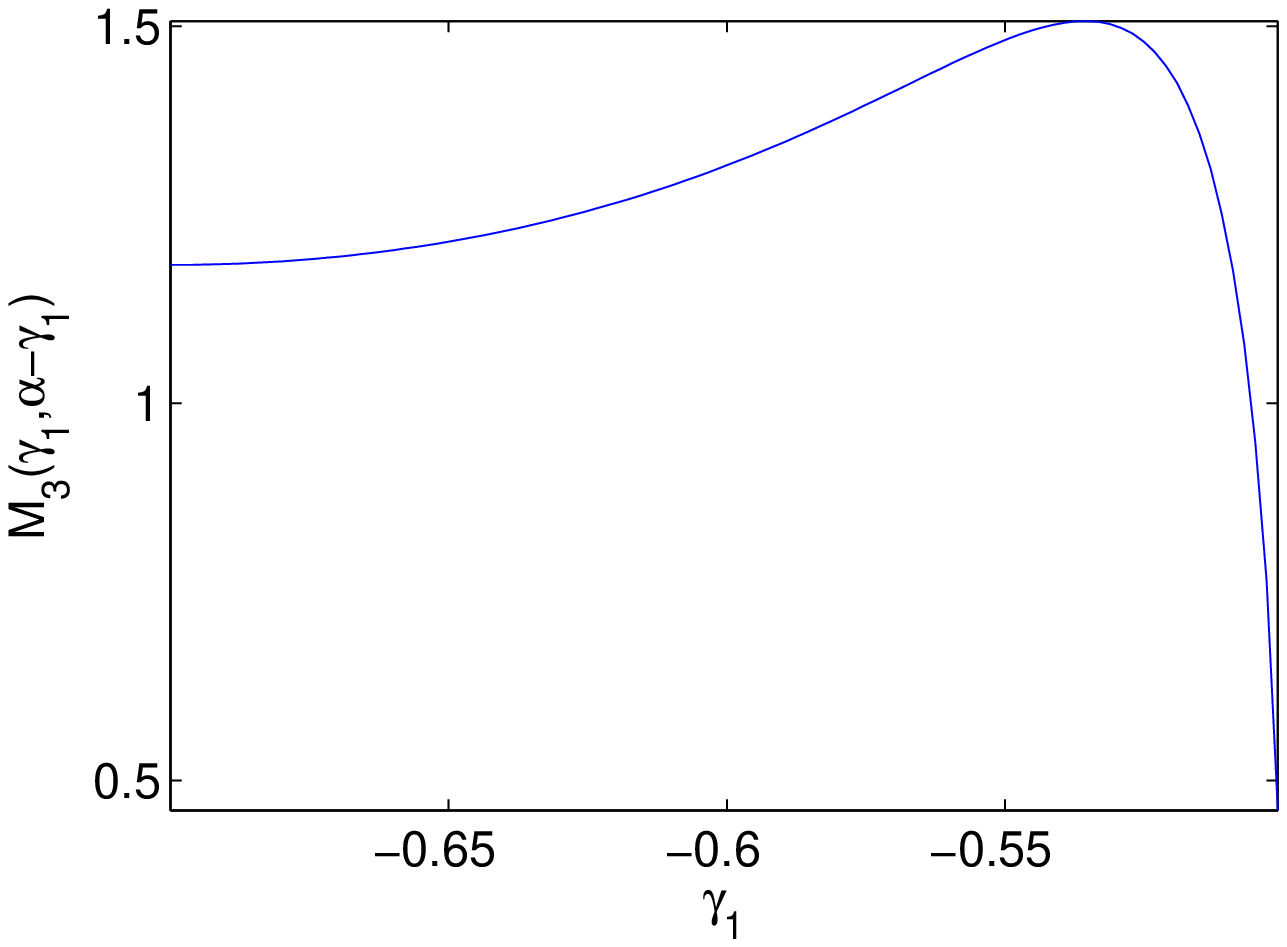}
\caption{$M_3(\gamma_1,\alpha-\gamma_1)$ when $\alpha=-1.4$  (or $H=0.6$).} 
\end{figure}

\begin{table}
\centering
\begin{tabular}{|c|c|c|c|c|c|c|c|c|c|c|}
\hline
$\gamma_1$ &-0.650 & -0.634 & -0.618 & -0.602 & -0.586 & -0.569 & -0.553 & -0.537 & -0.521 & -0.505 \\
\hline
$M_3(\gamma_1,\alpha-\gamma_1)$ &2.067 & 2.071 & 2.082 & 2.101 & 2.125 & 2.149 & 2.162 & 2.135 & 1.972 & 1.239 \\
\hline
\end{tabular}
\caption{$M_3(\gamma_1,\alpha-\gamma_1)$ when $\alpha=-1.3$  (or $H=0.7$).} 
\end{table}
\begin{figure}
\centering
\includegraphics[scale=0.6]{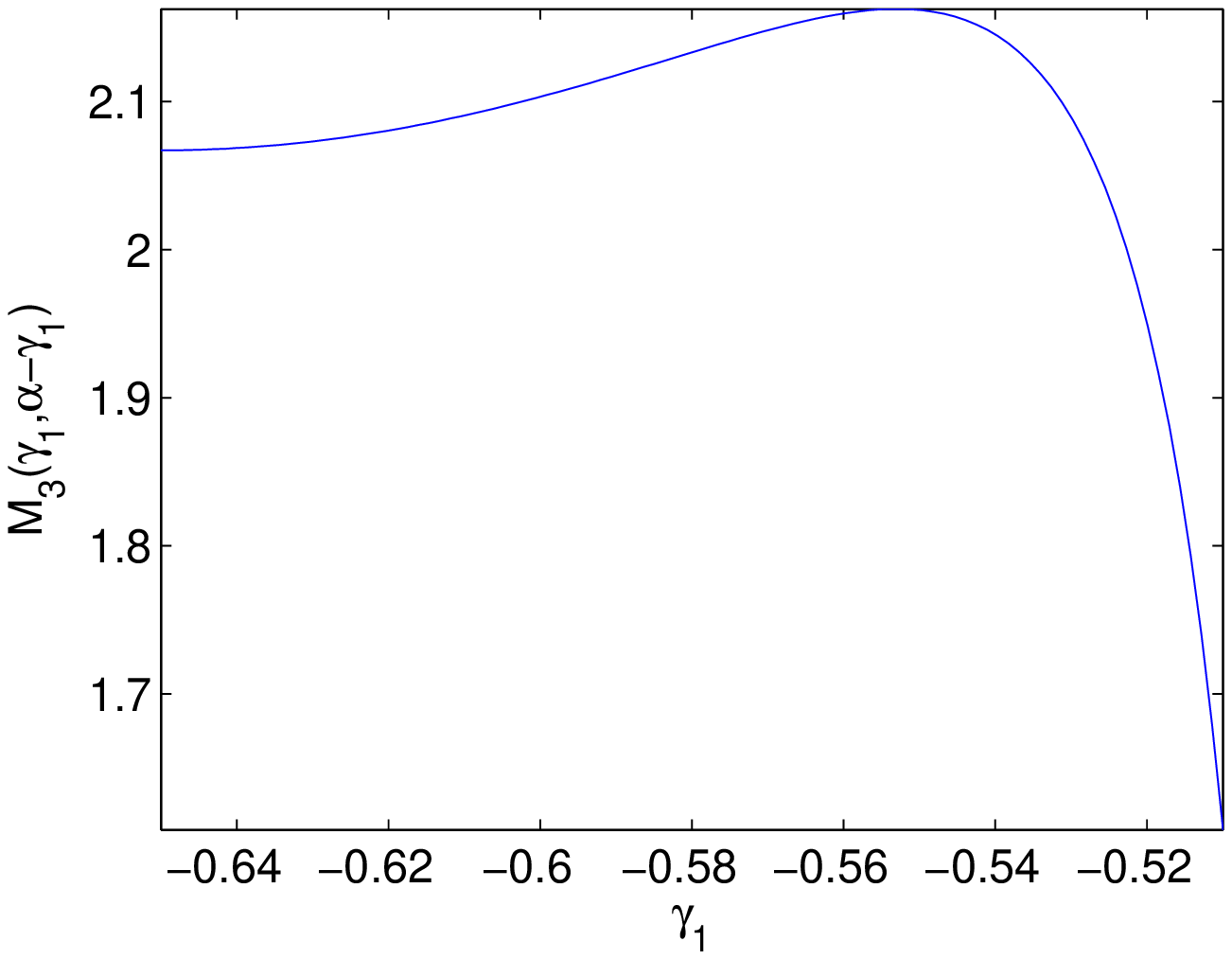}
\caption{$M_3(\gamma_1,\alpha-\gamma_1)$ when $\alpha=-1.3$  (or $H=0.7$).} 
\end{figure}

\begin{table}
\centering
\begin{tabular}{|c|c|c|c|c|c|c|c|c|c|c|}
\hline
$\gamma_1$ &-0.600 & -0.589 & -0.579 & -0.568 & -0.558 & -0.547 & -0.537 & -0.526 & -0.516 & -0.505 \\
\hline
$M_3(\gamma_1,\alpha-\gamma_1)$ &2.548 & 2.549 & 2.554 & 2.559 & 2.564 & 2.561 & 2.538 & 2.465 & 2.258 & 1.587 \\
\hline
\end{tabular}
\caption{$M_3(\gamma_1,\alpha-\gamma_1)$ when $\alpha=-1.2$  (or $H=0.8$).} 
\end{table}
\begin{figure}
\centering
\includegraphics[scale=0.6]{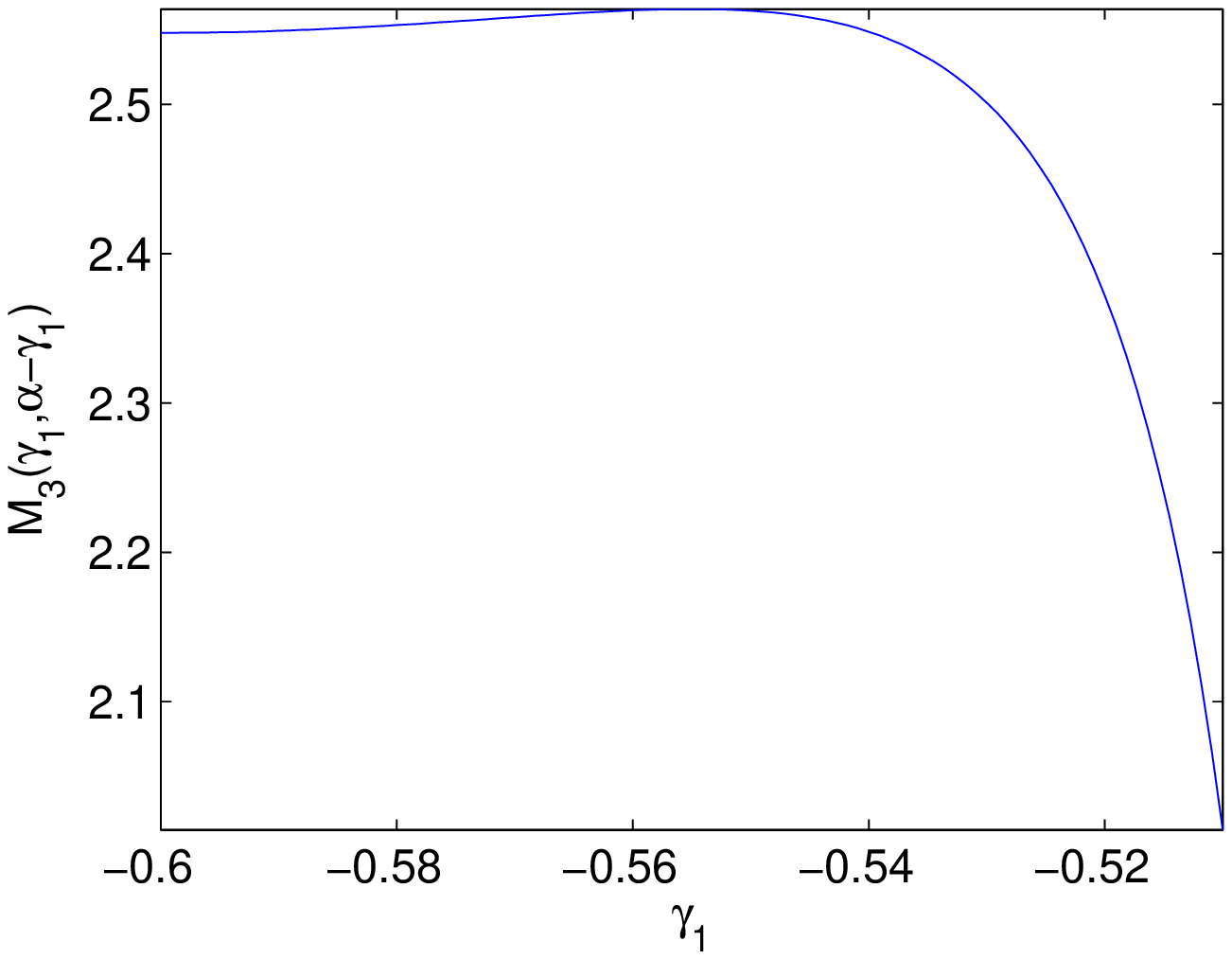}
\caption{$M_3(\gamma_1,\alpha-\gamma_1)$ when $\alpha=-1.2$  (or $H=0.8$).} 
\end{figure}

\begin{table}
\centering
\begin{tabular}{|c|c|c|c|c|c|c|c|c|c|c|}
\hline
$\gamma_1$ &-0.550 & -0.545 & -0.540 & -0.535 & -0.530 & -0.525 & -0.520 & -0.515 & -0.510 & -0.505 \\
\hline
$M_3(\gamma_1,\alpha-\gamma_1)$& 2.770 & 2.770 & 2.770 & 2.770 & 2.766 & 2.755 & 2.726 & 2.659 & 2.505 & 2.113 \\
\hline
\end{tabular}
\caption{$M_3(\gamma_1,\alpha-\gamma_1)$ when $\alpha=-1.1$  (or $H=0.9$).} 
\end{table}

\begin{figure}
\centering
\includegraphics[scale=0.6]{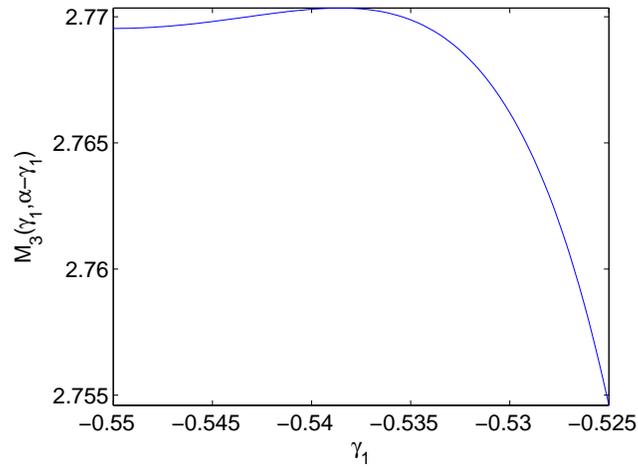}
\caption{$M_3(\gamma_1,\alpha-\gamma_1)$ when $\alpha=-1.1$  (or $H=0.9$).} 
\end{figure}



\medskip

\noindent\textbf{Acknowledgments.} We would like to thank the referee for noting an error in the original version, and for some other  comments leading to the improvement of the paper. This work was partially supported by the NSF grants DMS-1007616 and DMS-1309009 at Boston University.

\newpage

\medskip
\noindent Shuyang Bai~~~~~~~ \textit{bsy9142@bu.edu}\\
Murad S. Taqqu ~~\textit{murad@bu.edu}\\
Department of Mathematics and Statistics\\
111 Cumminton Street\\
Boston, MA, 02215, US
\end{document}